\documentclass[a4paper,11pt,leqno]{amsart}
\usepackage{a4wide} 
\usepackage{amscd}
\usepackage{amssymb}
\usepackage{amsfonts}
\usepackage[centertags]{amsmath}
\usepackage{latexsym}
\usepackage{verbatim}
\usepackage{amsthm}
\usepackage{color}
\usepackage{pgfplots}
\usepackage{tikz}
\usetikzlibrary{intersections}
\usepackage{amsfonts}
\usepackage{amssymb}
\usepackage{amsmath}
\usepackage{amsthm}
\usepackage[english]{babel}
\usepackage{subfigure}
\usepackage{enumerate}
\usepackage{epsfig}
\usepackage{graphicx}
\usepackage{color}
\usepackage{hyperref}
\pdfoutput=1



\newtheorem{theorem}{Theorem}[section]
\newtheorem{corollary}[theorem]{Corollary}
\newtheorem{lemma}[theorem]{Lemma}

\newtheorem{ackn}{Acknowledgments\!}


\theoremstyle{remark}

\DeclareMathOperator{\tr}{\mathrm{tr}}

\title[On the umbilic set of immersed surfaces]{On the umbilic set of immersed surfaces\\ in three-dimensional space forms}

\author{Giovanni Catino, Alberto Roncoroni, Luigi Vezzoni}

\date{\today}
\thanks{}
\address{G. Catino, Dipartimento di Matematica, Politecnico di Milano, Piazza Leonardo da Vinci 32, 20133 Milano, Italy.} \email{giovanni.catino@polimi.it} 

\address{A. Roncoroni,  Dipartimento di Matematica F. Casorati, Universit\`a di Pavia, Via Ferrata 5, 27100 Pavia, Italy.} \email{alberto.roncoroni01@universitadipavia.it} 

\address{L. Vezzoni, Dipartimento di Matematica G. Peano, Universit\`a di Torino, Via Carlo Alberto 10, 10123 Torino, Italy.} \email{luigi.vezzoni@unito.it}

\keywords{}
    \subjclass{}

\linespread{1.1}

\begin{document}

\maketitle

\begin{abstract}
We prove that under some assumptions on the mean curvature the set of umbilical points of an immersed surface in a $3$-dimensional space form has positive measure.  In  case of an immersed sphere our result can be seen as a generalization of the celebrated Hopf theorem. 
\end{abstract}

\

\begin{center}

\noindent{\it Key Words: immersed surfaces, Hopf theorem, mean curvature}

\medskip

\centerline{\bf AMS subject classification:  53C40, 53C42, 53A10}

\end{center}


%
%
%

\

\section{Introduction}
In \cite{Hopf1} Hopf proved his famous theorem 

 \begin{center}
 {\em  An immersed sphere of constant mean curvature in $\mathbb{R}^{3}$ is a round
sphere.}
\end{center}
The proof of Hopf's theorem involves the so-called {\em Hopf differential} $Q$, which is defined as the $(2,0)$-component of the second fundamental form of the surface (once regarded as a Riemann surface). The key point is that the zeros of $Q$ are the umbilical points of the surface and if the mean curvature is constant, $Q$ is holomorphic.  Hence the set of umbilical points of a connected constant mean curvature surface can be only discrete or the whole surface. In the case of an immersed sphere Hopf showed 
that this set cannot be discrete and the theorem follows. 

Hopf's theorem was generalized in three-dimensional space forms by Chern in \cite{Chern} and, more recently, in $3$-dimensional homogeneous spaces (see \cite{meeks} and the references therein). The study of constant mean curvature spacial Riemannian manifolds is a central subject in differential geometry and there are many interesting results on
this topic (see e.g. \cite{AdC,AL,Brendle,Catino1,Catino2,CdCK,NS,Simons,Yau} and the reference therein).

Moreover, it is well known that the theorem cannot be generalized to surfaces of higher genus and the first counterexample was provided by Wente in \cite{Wente} who constructed an immersed torus in $\mathbb{R}^3$ having constant mean curvature.

In order to state the result of the present paper we fix some notation for a given surface $M$ immersed in the $3$-dimensional space form $\mathbb{F}^3(c)$ of curvature $c$: $h$ is the second fundamental form; $H={\rm tr}\,h$ is the mean curvature; $g$ is the induced metric on $M$; $\mathring{h}=h-\frac{1}{2}\tr(h)g$ is the trace-free part of $h$. Moreover for $\epsilon>0$ we define the set
$$
\Omega_{\epsilon}=\{|\mathring{h}|\geq \epsilon\}\,.
$$ 
Notice that by definition $\Omega_{0}^c$ is the set of umbilical points of $M$. 

\

The aim of this note is provide some sufficient conditions on the mean curvature which imply that the set of umbilical points $\Omega_0^c$ of an immersed surface in a $3$-dimensional space form has positive measure.  More precisely the result reads as follows 
\begin{theorem}\label{Teo_Principale}
Let $M$ be an immersed surface in $\mathbb{F}^{3}(c)$. Then, for every $0<\varepsilon\leq1$, there exists a positive constant $C$, depending only on $||H||_{\infty}$ and $c$, such that
\begin{equation}\label{prel}
C\, {\rm Vol}(\Omega^c_\varepsilon) \geq  \frac{2}{\varepsilon^4}\int_{\Omega^c_\varepsilon}|\nabla \mathring{h}|^2|\mathring{h}|^2 - \frac{1}{\varepsilon^4}\int_{\Omega^c_\varepsilon}|\nabla H|^2|\mathring{h}|^2 +4\pi\chi(M)\, .
\end{equation}
In particular, if
\begin{equation}\label{ipo}
\limsup_{\varepsilon\rightarrow 0}\frac{1}{\varepsilon^4}\int_{\Omega^c_\varepsilon}|\nabla H|^2|\mathring{h}|^2  < \limsup_{\varepsilon\rightarrow 0}\frac{2}{\varepsilon^4}\int_{\Omega^c_\varepsilon}|\nabla \mathring{h}|^2|\mathring{h}|^2 +4\pi\chi(M) \,,
\end{equation}
then ${\rm Vol}(\Omega^c_0)>0.$
\end{theorem}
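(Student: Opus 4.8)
The plan is to deduce the integral inequality \eqref{prel} from a pointwise Simons--Bochner identity for $\mathring h$ together with Gauss--Bonnet, and then to get $\mathrm{Vol}(\Omega_0^c)>0$ by a soft limiting argument. I take $M$ to be closed, as the presence of $\chi(M)$ and the use of Gauss--Bonnet require; then $\int_M K = 2\pi\chi(M)$ and the divergence theorem leaves no boundary terms, $K$ denoting the Gauss curvature, which by the Gauss equation satisfies $K=c+\tfrac14 H^2-\tfrac12|\mathring h|^2$. The two structural ingredients are the Codazzi identity, which for a surface in a space form reads $\diver\mathring h=\tfrac12\nabla H$, and the Simons-type identity
\[
\tfrac12\Delta|\mathring h|^2=|\nabla\mathring h|^2+\langle\mathring h,\nabla^2 H\rangle+2K|\mathring h|^2 ,
\]
the zeroth-order term having been rewritten as $2K|\mathring h|^2$ through the Gauss equation (its value is forced by the classical fact that $\Delta\log|\mathring h|=2K$ on the non-umbilic set when $H$ is constant).

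First I would fix $0<\varepsilon\le1$ and pick a $C^1$ cutoff $G_\varepsilon\colon[0,\infty)\to[0,1]$ with $G_\varepsilon(t)=t^2/\varepsilon^4$ for $t\le\varepsilon^2$ and $G_\varepsilon(t)=1$ for $t\ge\varepsilon^2$, and test against $u:=\log|\mathring h|$, smooth on $\{|\mathring h|>0\}$. Since $G_\varepsilon$ vanishes to second order at the umbilic set while $\mathring h$ and $|\mathring h|^2$ are globally smooth, the product $G_\varepsilon(|\mathring h|^2)\,\Delta u$ and every term below extend integrably across $\Omega_0^c$, so the divergence theorem applies on $M$ and gives
\[
\int_M G_\varepsilon(|\mathring h|^2)\,\Delta u=-2\int_M G_\varepsilon'(|\mathring h|^2)\,|\nabla|\mathring h||^2 ,
\]
with $G_\varepsilon'$ supported in $\Omega_\varepsilon^c$ and equal to $2|\mathring h|^2/\varepsilon^4$ there. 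On the left I insert the expansion $\Delta u=2K+S$, where $S=|\mathring h|^{-2}\bigl(|\nabla\mathring h|^2-2|\nabla|\mathring h||^2+\langle\mathring h,\nabla^2 H\rangle\bigr)$, obtained by comparing the Simons identity with the elementary relation $\tfrac12\Delta|\mathring h|^2=|\mathring h|^2\Delta u+2|\nabla|\mathring h||^2$.

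The term $2\int_M G_\varepsilon(|\mathring h|^2)K$ is where $4\pi\chi(M)$ appears: on $\Omega_\varepsilon$ one has $G_\varepsilon=1$, so it equals $2\int_M K$ up to a remainder on $\Omega_\varepsilon^c$, where $|\mathring h|\le\varepsilon\le1$ and the bound on $K$ (hence on $H$ and $c$) render that remainder $\le C\,\mathrm{Vol}(\Omega_\varepsilon^c)$. In $\int_M G_\varepsilon S$ I would integrate the factor $\langle\mathring h,\nabla^2 H\rangle$ by parts and invoke Codazzi $\diver\mathring h=\tfrac12\nabla H$; this is exactly what produces the contribution $-\tfrac1{\varepsilon^4}\int_{\Omega_\varepsilon^c}|\nabla H|^2|\mathring h|^2$, the cross terms being absorbed by Cauchy--Schwarz and Young into $C\,\mathrm{Vol}(\Omega_\varepsilon^c)$. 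Collecting the gradient terms, the pieces $|\nabla\mathring h|^2$ and $|\nabla|\mathring h||^2$ combine, and the coefficient $2$ in front of $\tfrac1{\varepsilon^4}\int_{\Omega_\varepsilon^c}|\nabla\mathring h|^2|\mathring h|^2$ is forced by the refined Kato inequality $|\nabla\mathring h|^2\ge2|\nabla|\mathring h||^2$, which on a surface holds with the sharp constant $2$ and is an equality precisely on the holomorphic, i.e.\ constant-mean-curvature, locus (a direct computation in isothermal coordinates with the Hopf differential confirms both the constant and the equality case). The contributions over $\Omega_\varepsilon$ carry the favourable sign $|\nabla\mathring h|^2-2|\nabla|\mathring h||^2\ge0$ and are discarded in the correct direction.

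I expect the main obstacle to be exactly this bookkeeping of constants: arranging the integration by parts and the Kato inequality so that the coefficients emerge as $2$ and $-1$ rather than merely up to multiplicative factors, while simultaneously verifying that every leftover — the $\Omega_\varepsilon^c$ remainder of the Gauss--Bonnet term, the Codazzi cross terms, and the weighted algebraic curvature term $2K|\mathring h|^2$ — is genuinely bounded by $C\,\mathrm{Vol}(\Omega_\varepsilon^c)$ with $C=C(\|H\|_\infty,c)$. Once \eqref{prel} holds, the last assertion is soft: since $\Omega_\varepsilon^c=\{|\mathring h|\le\varepsilon\}$ decreases to $\Omega_0^c$ as $\varepsilon\downarrow0$, we have $\mathrm{Vol}(\Omega_\varepsilon^c)\downarrow\mathrm{Vol}(\Omega_0^c)$; taking $\limsup_{\varepsilon\to0}$ in \eqref{prel} and using \eqref{ipo} to make the right-hand side strictly positive along a sequence $\varepsilon_k\to0$ forces $\mathrm{Vol}(\Omega_{\varepsilon_k}^c)\ge\delta>0$, whence $\mathrm{Vol}(\Omega_0^c)\ge\delta>0$ by monotonicity.
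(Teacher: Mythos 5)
Your overall architecture is the same as the paper's: a Simons--Bochner identity for $\mathring h$, tested against a weight equal to $1$ on $\Omega_\varepsilon$ and to $|\mathring h|^4/\varepsilon^4$ on $\Omega^c_\varepsilon$ (your $G_\varepsilon(|\mathring h|^2)$ is exactly the paper's $|\mathring h|^4 f_\varepsilon^{-4}$), followed by Gauss--Bonnet and the limit $\mathrm{Vol}(\Omega^c_\varepsilon)\to\mathrm{Vol}(\Omega^c_0)$. The identity $\Delta\log|\mathring h|=2K+S$, the integration by parts of the Hessian term via Codazzi, and the final soft limiting step are all fine and match the paper.

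The gap is in how you produce the coefficients $2$ and $-1$. After integrating $\langle\mathring h,\nabla^2H\rangle$ by parts against your weight, the cross term is
$$\frac{1}{\varepsilon^4}\int_{\Omega^c_\varepsilon}\mathring h_{ij}\,\nabla_i|\mathring h|^2\,\nabla_j H\, .$$
On $\Omega^c_\varepsilon$ one only has $|\mathring h|\le\varepsilon$, so Cauchy--Schwarz bounds the integrand by $2\varepsilon^{-2}|\nabla|\mathring h||\,|\nabla H|$, which is \emph{not} controlled by a constant depending only on $\|H\|_\infty$ and $c$: relative to $C\,\mathrm{Vol}(\Omega^c_\varepsilon)$ this term diverges as $\varepsilon\to0$ unless the gradients degenerate, so it cannot be "absorbed by Cauchy--Schwarz and Young". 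Likewise, the refined Kato inequality $|\nabla\mathring h|^2\ge2|\nabla|\mathring h||^2$ (i) is valid for divergence-free trace-free tensors, whereas here $\nabla_i\mathring h_{ij}=\tfrac12\nabla_jH\neq0$ in general, and (ii) even where it holds it points the wrong way: to bound $\frac{4}{\varepsilon^4}\int|\nabla|\mathring h||^2|\mathring h|^2$ from \emph{below} by $\frac{2}{\varepsilon^4}\int|\nabla\mathring h|^2|\mathring h|^2$ you would need the reverse inequality $2|\nabla|\mathring h||^2\ge|\nabla\mathring h|^2$. What actually closes the argument, and makes \eqref{prel} an exact identity up to the zeroth-order curvature terms, is the pointwise two-dimensional Codazzi formula (the paper's \eqref{Smo}, a refined Kato \emph{identity with remainder})
$$2|\mathring h|^2\Bigl(|\nabla \mathring h|^2-\tfrac12|\nabla H|^2\Bigr)=4|\nabla|\mathring h||^2|\mathring h|^2-2\,\mathring h_{ij}\nabla_i|\mathring h|^2\nabla_jH\, ,$$
which converts the sum of the $|\nabla|\mathring h||^2$ term and the cross term exactly into $\frac{2}{\varepsilon^4}\int|\nabla\mathring h|^2|\mathring h|^2-\frac{1}{\varepsilon^4}\int|\nabla H|^2|\mathring h|^2$ with no error term. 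The same identity is also what makes the contributions over $\Omega_\varepsilon$ cancel exactly, rather than being "discarded in the correct direction" by a pointwise sign that you cannot guarantee when $H$ is non-constant. Without this ingredient your bookkeeping does not close.
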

As it will be clear from the proof, the result is sharp. That can be also deduced by checking that an ellipsoid of revolution (not spherical), which has two umbilical points, realizes the equality in \eqref{ipo} with $\chi(M)=2$. 

As an immediate corollary we have
\begin{corollary}\label{Cor}
Let $M$ be an immersed sphere in $\mathbb{F}^{3}(c)$. Then ${\rm Vol}(\Omega^c_0)>0$ if one of the following holds 
\begin{enumerate}   

\item[$1.$] $H$ is constant on $\Omega^c_{\varepsilon_0}$, for some $\varepsilon_0>0$;

\vspace{0.1cm}
\item[$2.$] $|\nabla H|^2 \leq 2 |\nabla \mathring{h}|^2$ on $\Omega^c_{\varepsilon_0}$, for some $\varepsilon_0>0$;

\vspace{0.1cm}
\item[$3.$] $\nabla H$ satisfies
$$
\limsup_{\varepsilon\rightarrow 0}\frac{1}{\varepsilon^2}\int_{\Omega^c_\varepsilon}|\nabla H|^2 < 8\pi\, .
$$
\end{enumerate}
\end{corollary}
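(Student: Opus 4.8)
The plan is to deduce the corollary directly from Theorem~\ref{Teo_Principale}. Since $M$ is an immersed sphere, $\chi(M)=2$ and $4\pi\chi(M)=8\pi>0$; the whole point will be that this fixed positive quantity dominates the left-hand side of \eqref{ipo} in each of the three regimes. Throughout I write
$$
A_\varepsilon:=\frac{1}{\varepsilon^4}\int_{\Omega^c_\varepsilon}|\nabla H|^2|\mathring{h}|^2,\qquad B_\varepsilon:=\frac{2}{\varepsilon^4}\int_{\Omega^c_\varepsilon}|\nabla \mathring{h}|^2|\mathring{h}|^2\ge0,
$$
and I record two elementary facts: on $\Omega^c_\varepsilon=\{|\mathring{h}|<\varepsilon\}$ one has the pointwise bound $|\mathring{h}|^2<\varepsilon^2$, and the sets are monotone, $\Omega^c_\varepsilon\subseteq\Omega^c_{\varepsilon_0}$ for $\varepsilon\le\varepsilon_0$, with $\Omega^c_\varepsilon\downarrow\Omega^c_0$, so that ${\rm Vol}(\Omega^c_\varepsilon)\to{\rm Vol}(\Omega^c_0)$ as $\varepsilon\to0$.

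The heart of the matter is to bound $A_\varepsilon$ from above in each case. In case $2.$, restricting the hypothesis to $\Omega^c_\varepsilon\subseteq\Omega^c_{\varepsilon_0}$ and multiplying $|\nabla H|^2\le 2|\nabla\mathring{h}|^2$ by $|\mathring{h}|^2$ before integrating gives $A_\varepsilon\le B_\varepsilon$ for all $\varepsilon\le\varepsilon_0$; case $1.$ is the special instance $\nabla H\equiv0$, so that $A_\varepsilon=0\le B_\varepsilon$. In case $3.$, the pointwise inequality $|\mathring{h}|^2<\varepsilon^2$ yields
$$
A_\varepsilon<\frac{1}{\varepsilon^2}\int_{\Omega^c_\varepsilon}|\nabla H|^2,
$$
whence $A_\varepsilon<8\pi-\eta$ for some $\eta>0$ and all sufficiently small $\varepsilon$, by the standing assumption $\limsup_{\varepsilon\to0}\varepsilon^{-2}\int_{\Omega^c_\varepsilon}|\nabla H|^2<8\pi$.

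It remains to convert these bounds into the conclusion. The most natural route is to feed them into \eqref{ipo}: this works immediately in cases $1.$ and $3.$, where $\limsup_{\varepsilon\to0}A_\varepsilon$ is bounded by a quantity strictly below the right-hand side $\limsup_{\varepsilon\to0}B_\varepsilon+8\pi$. In case $2.$, however, the $\limsup$ form is awkward should $\limsup_{\varepsilon\to0}B_\varepsilon=+\infty$, since $A_\varepsilon\le B_\varepsilon$ does not by itself force $\limsup_{\varepsilon\to0}A_\varepsilon$ to be finite; I expect this to be the only genuine obstacle. I would bypass it by arguing from \eqref{prel} directly rather than from its limsup consequence \eqref{ipo}. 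Indeed, in cases $1.$ and $2.$ the bound $A_\varepsilon\le B_\varepsilon$ turns \eqref{prel} into
$$
C\,{\rm Vol}(\Omega^c_\varepsilon)\ge B_\varepsilon-A_\varepsilon+8\pi\ge 8\pi\qquad(\varepsilon\le\min\{\varepsilon_0,1\}),
$$
while in case $3.$ one gets $C\,{\rm Vol}(\Omega^c_\varepsilon)\ge B_\varepsilon-A_\varepsilon+8\pi\ge 8\pi-A_\varepsilon\ge\eta$ for all small $\varepsilon$. In every case ${\rm Vol}(\Omega^c_\varepsilon)$ is bounded below by a positive constant independent of $\varepsilon$, and letting $\varepsilon\to0$ with ${\rm Vol}(\Omega^c_\varepsilon)\to{\rm Vol}(\Omega^c_0)$ yields ${\rm Vol}(\Omega^c_0)>0$, as desired.
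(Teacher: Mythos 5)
Your proposal is correct and follows the same route as the paper: the corollary is read off from Theorem~\ref{Teo_Principale} with $\chi(M)=2$, after checking that each hypothesis controls $\frac{1}{\varepsilon^4}\int_{\Omega^c_\varepsilon}|\nabla H|^2|\mathring{h}|^2$ (your $A_\varepsilon$) in terms of the right-hand side of \eqref{ipo}. The one place where you go beyond the paper is worth keeping: the paper asserts that each condition implies \eqref{ipo}, but in case $2.$ the bound $A_\varepsilon\le B_\varepsilon$ does not yield the strict limsup inequality \eqref{ipo} if $\limsup_{\varepsilon\to0}B_\varepsilon=+\infty$, and your workaround --- applying \eqref{prel} directly to get $C\,{\rm Vol}(\Omega^c_\varepsilon)\ge B_\varepsilon-A_\varepsilon+8\pi\ge 8\pi$ uniformly in small $\varepsilon$ and then letting $\varepsilon\to0$ --- is exactly the right repair, and in fact mirrors how the paper itself deduces the conclusion of Theorem~\ref{Teo_Principale} from \eqref{prel}. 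Your treatments of cases $1.$ and $3.$ (taking $\nabla H\equiv0$, respectively using $|\mathring{h}|^2<\varepsilon^2$ on $\Omega^c_\varepsilon$ to trade two powers of $\varepsilon$) are the intended ones.
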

Notice that the corollary in particular implies that if $H$ is constant, then the set of umbilical points on $M$ has positive measure which implies Hopf's theorem.     

\

\noindent 
{\bf Notation.} Throughout the paper the Einstein convention will be adopted and and the summation over repeated indices is omitted.

\

\section{Proof of Theorem \ref{Teo_Principale} and Corollary \ref{Cor}}

As preliminary result we prove a Bochner-Weitzenb\"ock type formula for the trace-free part of the second fundamental form of an immersed surface in  a three dimensional space forms (for similar results we refer to \cite{Bochner,Bourguignon}).

\begin{lemma}
Let $M$ be an immersed surface in  $\mathbb F^3(c)$. Then
\begin{align}\label{Laplacian_quater}
\frac{1}{2}\Delta |\mathring{h}|^2 |\mathring{h}|^2=
2|\nabla |\mathring{h}||^2|\mathring{h}|^2-\mathring{h}_{ij}\nabla_i|\mathring{h}|^2\nabla_j H+\frac{1}{2}|\nabla H|^2|\mathring{h}|^2+R|\mathring{h}|^4+ \mathring{h}_{ij} \nabla_i\nabla_j H|\mathring{h}|^2 \,,
\end{align}
where  $R$ is the the scalar curvature of $M$.
\end{lemma}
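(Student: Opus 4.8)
The plan is to derive \eqref{Laplacian_quater} by combining three ingredients carried out intrinsically on $M$: the Bochner identity, a Simons-type formula for $\mathring{h}$, and a pointwise refined Kato identity special to dimension two. First I would start from the Bochner decomposition $\tfrac12\Delta|\mathring{h}|^2=|\nabla\mathring{h}|^2+\mathring{h}_{ij}\Delta\mathring{h}_{ij}$, where $\Delta$ acts as the rough Laplacian on $\mathring{h}$; multiplying by $|\mathring{h}|^2$ already isolates the term $|\nabla\mathring{h}|^2|\mathring{h}|^2$ together with the Hessian/curvature contribution that must be matched against the right-hand side of \eqref{Laplacian_quater}.

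The second step is to compute $\Delta\mathring{h}_{ij}$. Since $M$ lies in a space form, the Codazzi equations make $\nabla h$ a totally symmetric $3$-tensor, so $\nabla_kh_{kj}=\nabla_jH$ and $\Delta h_{ij}=\nabla_k\nabla_kh_{ij}=\nabla_k\nabla_ih_{kj}$; commuting the two derivatives via the Ricci identity and using the two-dimensional form of the curvature tensor $R_{ijkl}=\tfrac{R}{2}(g_{ik}g_{jl}-g_{il}g_{jk})$ gives, after removing the trace, $\Delta\mathring{h}_{ij}=\nabla_i\nabla_jH+R\,\mathring{h}_{ij}-\tfrac12(\Delta H)g_{ij}$. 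The sign of the curvature term is the one compatible, through the Gauss equation $R=2c+\tfrac12H^2-|\mathring{h}|^2$, with the classical Simons identity. Contracting with $\mathring{h}_{ij}$ and using that $\mathring{h}$ is trace-free removes the $\Delta H$ term and yields $\mathring{h}_{ij}\Delta\mathring{h}_{ij}=\mathring{h}_{ij}\nabla_i\nabla_jH+R|\mathring{h}|^2$, so that after multiplication by $|\mathring{h}|^2$ the terms $R|\mathring{h}|^4$ and $\mathring{h}_{ij}\nabla_i\nabla_jH\,|\mathring{h}|^2$ of \eqref{Laplacian_quater} are produced.

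It then remains to prove the pointwise identity
\[
|\nabla\mathring{h}|^2|\mathring{h}|^2=2|\nabla |\mathring{h}||^2|\mathring{h}|^2-\mathring{h}_{ij}\nabla_i|\mathring{h}|^2\,\nabla_jH+\tfrac12|\nabla H|^2|\mathring{h}|^2,
\]
which is where the dimension being two is essential. The two structural facts are the algebraic identity $\mathring{h}_{ik}\mathring{h}_{kj}=\tfrac12|\mathring{h}|^2g_{ij}$, valid for trace-free symmetric $2$-tensors on a surface, and the orthogonal splitting $\nabla_k\mathring{h}_{ij}=\hat{T}_{kij}+\tfrac14(g_{ki}\nabla_jH+g_{kj}\nabla_iH-g_{ij}\nabla_kH)$, where $\hat{T}$ is the totally symmetric trace-free part of $\nabla h$. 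Setting $P_k:=\mathring{h}_{ij}\hat{T}_{kij}$ and $Q_k:=\mathring{h}_{kj}\nabla_jH$, one computes $\tfrac12\nabla_k|\mathring{h}|^2=P_k+\tfrac12Q_k$, then $|\nabla\mathring{h}|^2=|\hat{T}|^2+\tfrac14|\nabla H|^2$ and $|Q|^2=\tfrac12|\mathring{h}|^2|\nabla H|^2$; feeding these into the identity above collapses the cross terms $P_kQ_k$ and reduces everything to the single algebraic equality $|\hat{T}|^2|\mathring{h}|^2=2|P|^2$, which I would check by evaluating both sides in an orthonormal frame after parametrizing $\mathring{h}$ and the totally trace-free $\hat{T}$ by two real parameters each.

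The main obstacle is exactly this refined-Kato-type identity: whereas the Bochner and Simons steps are essentially formal, the equality $|\hat{T}|^2|\mathring{h}|^2=2|P|^2$ genuinely exploits that the spaces of trace-free symmetric $2$-tensors and of totally symmetric trace-free $3$-tensors are both two-dimensional, and it is this, together with the clean cancellation $-\tfrac12|Q|^2+\tfrac14|\nabla H|^2|\mathring{h}|^2=0$, that forces the numerical coefficients in \eqref{Laplacian_quater} to come out precisely. Once this identity is established, assembling the three steps gives \eqref{Laplacian_quater}.
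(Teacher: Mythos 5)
Your proposal is correct and follows the same overall route as the paper: the Codazzi/Simons computation giving $\Delta\mathring{h}_{ij}=\nabla_i\nabla_jH+R\,\mathring{h}_{ij}-\tfrac12(\Delta H)g_{ij}$ (the paper's \eqref{Laplacian_bis}), the Bochner step, and then the pointwise identity
$|\nabla\mathring{h}|^2|\mathring{h}|^2=2|\nabla|\mathring{h}||^2|\mathring{h}|^2-\mathring{h}_{ij}\nabla_i|\mathring{h}|^2\nabla_jH+\tfrac12|\nabla H|^2|\mathring{h}|^2$, which is exactly the paper's \eqref{Smo}. The only difference is that the paper imports \eqref{Smo} from Smoczyk's Formula 34, whereas you prove it from scratch by splitting $\nabla h$ into its totally symmetric trace-free part $\hat T$ plus the trace part carried by $\nabla H$; I checked that your reduction is right and that the residual algebraic identity $|\hat T|^2|\mathring{h}|^2=2|P|^2$ does hold (in an orthonormal frame, with $\hat T$ parametrized by $(a,b)$ and $\mathring{h}$ by $(p,q)$, both sides equal $8(a^2+b^2)(p^2+q^2)$), so your argument has the added value of being self-contained.
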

\begin{proof}
For any $p\in M$ we choose a local orthonormal frame $\{e_1,e_2,e_3\}$ in $\mathbb F^3(c)$ around $p$ such that $\{e_1,e_2\}$ are tangential to $M$. In this frame Codazzi and Gauss equations read as (see e.g. \cite{DoCarmo})
\begin{equation}\label{codazzi_eq}
\nabla_k h_{ij}-\nabla_j h_{ik}=0\, ,
\end{equation}
and the Riemannian curvature tensor of $M$ satisfies
\begin{equation}\label{gauss_eq}
R_{ikjl} = c (g_{ij}g_{kl} + g_{il}g_{jk}) + h_{ij}h_{kl} - h_{il}h_{jk}\, . 
\end{equation}
From \eqref{codazzi_eq} we get that $\mathring{h}=h-(H/2) g$ satisfies the following equation 
\begin{equation}\label{codazzi_eq_trace}
\nabla_k \mathring{h}_{ij}-\nabla_j \mathring{h}_{ik}=\frac{1}{2}\left(\nabla_j H g_{ik}-\nabla_k H g_{ij}\right)\, .
\end{equation}
Taking the divergence of \eqref{codazzi_eq_trace} we get
\begin{equation}\label{div}
\nabla_k \mathring{h}_{ij}=\frac{1}{2}\nabla_j H\,.
\end{equation}
Moreover, taking the covariant derivative of \eqref{codazzi_eq_trace} and tracing we obtain
\begin{align}\label{Laplacian}
\Delta \mathring{h}_{ij}=&\nabla_j\nabla_k\mathring{h}_{ik}+R\,\mathring{h}_{ij}+\frac{1}{2}\left(\nabla_i\nabla_j H-\Delta Hg_{ij}\right) \, .
\end{align}
From \eqref{div} and the commutation formula 
$$
\nabla_k\nabla_j\mathring{h}_{ik} = \nabla_j\nabla_k\mathring{h}_{ik} +R\,\mathring{h}_{ij}\, ,
$$ 
we deduce 
\begin{align}\label{Laplacian_bis}
\Delta \mathring{h}_{ij}=R\,\mathring{h}_{ij}+\nabla_i\nabla_j H-\frac{1}{2}\Delta Hg_{ij} \, .
\end{align}
From \eqref{Laplacian_bis} we obtain 
\begin{equation}\label{Laplacian_bis_bis}
\frac{1}{2}\Delta |\mathring{h}|^2=|\nabla\mathring{h}|^2+R|\mathring{h}|^2+\nabla_i\nabla_j H\mathring{h}_{ij}
\end{equation}
and then 
\begin{equation}\label{Laplacian_ter}
\frac{1}{2}\Delta |\mathring{h}|^2|\mathring{h}|^2=|\nabla\mathring{h}|^2|\mathring{h}|^2+R|\mathring{h}|^4+\nabla_i\nabla_j H\mathring{h}_{ij}|\mathring{h}|^2\, .
\end{equation}
The conclusion follows from the following formula: 
\begin{equation}\label{Smo}
2|\mathring{h}|^2\left(|\nabla \mathring{h}|^2-\frac{1}{2}|\nabla H|^2\right)=4|\nabla |\mathring{h}||^2|\mathring{h}|^2-2\mathring{h}_{ij}\nabla_i|\mathring{h}|^2\nabla_j H
\end{equation}
which follows from \cite[Formula 34]{Smoczyk} taking into account that $|\nabla h|^2=|\nabla\mathring{h}|^2+\frac{1}{2}|\nabla H|^2$; indeed by using \eqref{Smo} in \eqref{Laplacian_ter} we get \eqref{Laplacian_quater}.

\end{proof}

\

\noindent Now we can prove Theorem \ref{Teo_Principale}.

\begin{proof}[Proof of Theorem $\ref{Teo_Principale}$]
We consider the following positive function
$$
f_\varepsilon=\begin{cases}
|\mathring{h}|\ & \text{ in } \Omega_\varepsilon \\
\varepsilon & \text{ in } \Omega^c_\varepsilon  \, ,
\end{cases}
$$
where, we recall that, $\Omega_\varepsilon=\{|\mathring{h}|\geq\varepsilon\}$. By multiplying \eqref{Laplacian_quater} by $f^{-4}_\varepsilon$ and integrating over $M$  we get
\begin{align}\label{integrale}
0=&-\frac{1}{2}\int_M\Delta |\mathring{h}|^2|\mathring{h}|^2f^{-4}_\varepsilon+2\int_M|\nabla |\mathring{h}||^2|\mathring{h}|^2f^{-4}_\varepsilon-\int_M\mathring{h}_{ij}\nabla_i|\mathring{h}|^2\nabla_j Hf^{-4}_\varepsilon\\\nonumber
&+\frac{1}{2}\int_M|\nabla H|^2|\mathring{h}|^2f^{-4}_\varepsilon +\int_M R|\mathring{h}|^4f^{-4}_\varepsilon+\int_M \nabla_i\nabla_j H\mathring{h}_{ij}|\mathring{h}|^2f^{-4}_\varepsilon\, .
\end{align}
Now we analyse the first, the second and the last integrals in \eqref{integrale}. About the first one: by integrating by parts and taking into account that $\nabla f_\varepsilon=0$ in $\Omega^c_\varepsilon$ we get
\begin{equation*}
-\frac{1}{2}\int_M\Delta |\mathring{h}|^2|\mathring{h}|^2f^{-4}_\varepsilon=\frac{1}{2}\int_M|\nabla|\mathring{h}|^2|^2f^{-4}_\varepsilon-4\int_{\Omega_\varepsilon}|\nabla|\mathring{h}||^2|\mathring{h}|^{-2}\ , 
\end{equation*}
i.e. 
\begin{equation*}
-\frac{1}{2}\int_M\Delta |\mathring{h}|^2|\mathring{h}|^2f^{-4}_\varepsilon=2\int_{\Omega_\varepsilon}|\nabla|\mathring{h}||^2|\mathring{h}|^2f^{-4}_\varepsilon+2\int_{\Omega^c_\varepsilon}|\nabla|\mathring{h}||^2|\mathring{h}|^2f^{-4}_\varepsilon-4\int_{\Omega_\varepsilon}|\nabla|\mathring{h}||^2|\mathring{h}|^{-2}\,  ,
\end{equation*}
where we have used that $|\nabla|\mathring{h}|^2|^2=4|\mathring{h}|^2|\nabla|\mathring{h}||^2$. From the definition of $f_\varepsilon$ we obtain
\begin{equation}\label{1}
-\frac{1}{2}\int_M\Delta |\mathring{h}|^2|\mathring{h}|^2f^{-4}_\varepsilon=-2\int_{\Omega_\varepsilon}|\nabla|\mathring{h}||^2|\mathring{h}|^{-2}+\frac{2}{\varepsilon^4}\int_{\Omega^c_\varepsilon}|\nabla|\mathring{h}||^2|\mathring{h}|^2\, .
\end{equation}
Moreover the definition of $f_\varepsilon$ implies
\begin{equation}\label{2}
2\int_M|\nabla |\mathring{h}||^2|\mathring{h}|^2f^{-4}_\varepsilon=2\int_{\Omega_\varepsilon}|\nabla|\mathring{h}||^2|\mathring{h}|^{-2}+\frac{2}{\varepsilon^4}\int_{\Omega^c_\varepsilon}|\nabla|\mathring{h}||^2|\mathring{h}|^2
\end{equation}
and by integrating by parts, using  the definition of $f_\varepsilon$ and taking into account that  $\nabla f_\varepsilon=0$ in $\Omega^c_\varepsilon$ we get
\begin{equation}\label{3}
\begin{aligned}
\int_M \nabla_i\nabla_j H\mathring{h}_{ij}|\mathring{h}|^2f^{-4}_\varepsilon=&-\frac{1}{2}\int_M|\nabla H|^2|\mathring{h}|^2f^{-4}_{\varepsilon}-\frac{1}{\varepsilon^4}\int_{\Omega^c_\varepsilon}\mathring{h}_{ij}\nabla_i|\mathring{h}|^2\nabla_j H \\
&+\int_{\Omega_\varepsilon}\mathring{h}_{ij}\nabla_j H\nabla_i|\mathring{h}|^2|\mathring{h}|^{-4}\, .
\end{aligned}
\end{equation}
So from \eqref{integrale}, \eqref{1}, \eqref{2}, \eqref{3} and from the definition of $f_\varepsilon$ we obtain
\begin{equation*}
0=\int_{\Omega_\varepsilon}R+\frac{1}{\varepsilon^4}\int_{\Omega^c_\varepsilon}R|\mathring{h}|^4-\frac{2}{\varepsilon^4}\int_{\Omega^c_\varepsilon}\mathring{h}_{ij}\nabla_i|\mathring{h}|^2\nabla_j H+\frac{4}{\varepsilon^4}\int_{\Omega^c_\varepsilon}|\nabla|\mathring{h}||^2|\mathring{h}|^2\, .
\end{equation*}
From Gauss-Bonnet Theorem we get
\begin{align}\label{integrale_tris}
0=&4\pi\chi(M)-\int_{\Omega^c_\varepsilon}R-\frac{2}{\varepsilon^4}\int_{\Omega^c_\varepsilon}\mathring{h}_{ij}\nabla_i|\mathring{h}|^2\nabla_j H+\frac{4}{\varepsilon^4}\int_{\Omega^c_\varepsilon}|\nabla|\mathring{h}||^2|\mathring{h}|^2+\frac{1}{\varepsilon^4}\int_{\Omega^c_\varepsilon}R|\mathring{h}|^4 \, .
\end{align}
Moreover, tracing Gauss equation \eqref{gauss_eq} we get $R=2c+H^2-|h|^2$, which in terms of $\mathring{h}$ becomes $R=\frac{1}{2}H^2-|\mathring{h}|^2+2c$; so \eqref{integrale_tris} yields 
\begin{align*}
0=&4\pi\chi(M)-\int_{\Omega^c_\varepsilon}\left(\frac{1}{2}H^2-|\mathring{h}|^2+2c\right)-\frac{2}{\varepsilon^4}\int_{\Omega^c_\varepsilon}\mathring{h}_{ij}\nabla_i|\mathring{h}|^2\nabla_j H \\\nonumber
&+\frac{4}{\varepsilon^4}\int_{\Omega^c_\varepsilon}|\nabla|\mathring{h}||^2|\mathring{h}|^2+\frac{1}{\varepsilon^4}\int_{\Omega^c_\varepsilon}\left(\frac{1}{2}H^2-|\mathring{h}|^2+2c\right)|\mathring{h}|^4\, .
\end{align*}
From $\eqref{Smo}$, we obtain
\begin{equation}
\begin{aligned}
0  = &\, 4\pi\chi(M)+\frac{2}{\varepsilon^4}\int_{\Omega^c_\varepsilon}|\nabla \mathring{h}|^2|\mathring{h}|^2 - \frac{1}{\varepsilon^4}\int_{\Omega^c_\varepsilon}|\nabla H|^2|\mathring{h}|^2 \\ \nonumber
&-\int_{\Omega^c_\varepsilon}\left(\frac{1}{2}H^2-|\mathring{h}|^2+2c\right)+\frac{1}{\varepsilon^4}\int_{\Omega^c_\varepsilon}\left(\frac{1}{2}H^2-|\mathring{h}|^2+2c\right)|\mathring{h}|^4 
\end{aligned}
\end{equation}
i.e. 
\begin{equation*}
0  \geq \, 4\pi\chi(M)+\frac{2}{\varepsilon^4}\int_{\Omega^c_\varepsilon}|\nabla \mathring{h}|^2|\mathring{h}|^2 - \frac{1}{\varepsilon^4}\int_{\Omega^c_\varepsilon}|\nabla H|^2|\mathring{h}|^2 - C\, {\rm Vol}(\Omega^c_\varepsilon)\, ,
\end{equation*}
which is \eqref{prel}, where $C$ is given by
$$
C=\frac12 \Vert H\Vert^2_\infty + 4|c| +1,
$$
if $\varepsilon \leq 1$. Now since $|\Omega^c_\varepsilon|\rightarrow|\Omega^c_0|$ as $\varepsilon\to 0$, then, if \eqref{ipo} holds, then ${\rm Vol}(\Omega^c_0)>0$. This concludes the proof of Theorem \ref{Teo_Principale}.

\end{proof}

\

\begin{proof}[Proof of Corollary $\ref{Cor}$]
The proof of Corollary $\ref{Cor}$ is an immediate application of Theorem \ref{Teo_Principale} for immersed spheres (i.e. $\chi(M)=2$), since each of the three conditions implies inequality \eqref{ipo}. 
\end{proof}

\

\begin{ackn}
\noindent {\em G.C and A.R. have been partially supported by the "Gruppo Nazionale per l'Analisi Matematica, la Probabilità e le loro Applicazioni" (GNAMPA) of the "Istituto Nazionale di Alta Matematica" (INdAM). L.V. has been partially supported by the G.N.S.A.G.A. of I.N.d.A.M. This manuscript was written while G.C. and A.R. were visiting the Department of Mathematics of the University of Turin, which is acknowledged for the hospitality.
}
\end{ackn}

\

\

\

\

\

\end{document}